\theoremstyle{plain}
\newtheorem{theorem}{Theorem}[section]
\theoremstyle{definition}
\newtheorem{definition}[theorem]{Definition}
\newtheorem{assumption}[theorem]{Assumption}
\theoremstyle{remark}
\newtheorem{remark}[theorem]{Remark}
\def\##1\#{\begin{align}#1\end{align}}
\def\$#1\${\begin{align*}#1\end{align*}}
\def\eps{\varepsilon}
\newcommand{\ignore}[1]{}
\DeclarePairedDelimiterX{\infdivx}[2]{(}{)}{%
  #1\;\delimsize\|\;#2%
}
\titlespacing\section{0pt}{2pt plus 1pt minus 1pt}{1pt plus 1pt minus 1pt}
\titlespacing\subsection{0pt}{2pt plus 1pt minus 1pt}{1pt plus 1pt minus 1pt}
     \def\NN{\mathbb{N}}
     \def\RR{\mathbb{R}}
\def\calA{{\cal  A}} \def\cA{{\cal  A}}
 \def\cC{{\cal  C}}
 \def\cH{{\cal  H}}
\def\calK{{\cal  K}} 
 \def\cL{{\cal  L}}
 \def\cM{{\cal  M}}
\DeclareMathOperator*{\argmin}{arg\,min}
\DeclareMathOperator{\regret}{\mathcal{R}}
\DeclarePairedDelimiterX{\ip}[2]{\langle}{\rangle}{#1, #2}
\newcommand{\eqdef}{\stackrel{\text{def}}{=}}
\icmltitlerunning{A Regret Minimization Approach to Multi-Agent Control}
\begin{document}

\twocolumn[
\icmltitle{A Regret Minimization Approach to Multi-Agent Control}



\icmlsetsymbol{equal}{*}

\begin{icmlauthorlist}
\icmlauthor{Udaya Ghai}{pu,goog}
\icmlauthor{Udari Madhushani}{pu2}
\icmlauthor{Naomi Leonard}{pu2}
\icmlauthor{Elad Hazan}{pu,goog}

\end{icmlauthorlist}

\icmlaffiliation{pu}{Department of Computer Science, Princeton University, Princeton, NJ}
\icmlaffiliation{pu2}{Department of Mechanical and Aerospace Engineering, Princeton University, Princeton, NJ}
\icmlaffiliation{goog}{Google AI Princeton, Princeton, NJ}

\icmlcorrespondingauthor{Udaya Ghai}{ughai@cs.princeton.edu}

\icmlkeywords{Machine Learning}

\vskip 0.3in
]



\printAffiliationsAndNotice{}  

\begin{abstract}
We study the problem of  multi-agent control of a dynamical system with  known dynamics and adversarial disturbances. Our study focuses on optimal control without centralized precomputed policies, but rather with adaptive control policies for the different agents that are only equipped with a stabilizing controller. We give a reduction from any (standard) regret minimizing control method to a distributed algorithm. The reduction guarantees that the resulting distributed algorithm has low regret relative to the optimal precomputed joint policy. Our methodology involves generalizing online convex optimization to a multi-agent setting and applying recent tools from nonstochastic control derived for a single agent. We empirically evaluate our method on a model of an overactuated aircraft. We show that the distributed method is robust to failure and to adversarial perturbations in the dynamics. 

\end{abstract}

\vspace{-5mm}\section{Introduction}
In many real-world scenarios it is desirable to apply multi-agent control
algorithms, as opposed to a centralized designed policy, for a number of  practical
reasons. First is increased robustness:  agents, serving as control actuators, may be added or removed
from an existing set, without change to the distributed policies. This
also allows for fault-tolerance; agents can adapt  on the fly to faulty actuators.
Second, limited computational resources and/or system information may limit
applicability of a sophisticated centralized policy, whereas simple distributed policy
agents are feasible and can adapt to the underlying dynamics.  Third,
multi-agent control allows robustness to noisy state and control observations
by the individual agents. These advantages motivate the study of
distributed control, and indeed a vast literature exists on the design and analysis of such methods. 

In this paper we propose a novel game-theoretic performance metric for multi-agent control (MAC), which we call {\bf multi-agent regret}. This metric measures the difference between the total loss of the joint policy of individual agents vs. the loss of the best joint policy in hindsight from a certain reference class of policies. Vanishing multi-agent regret thus implies competitive performance with respect to that of the optimal policy in a certain policy class. 


We study a mechanism for MAC that takes individual controllers and merge them into a MAC method with vanishing regret. 
For this purpose, we make use of recent advances in online learning for control and specifically the framework of {\it nonstochastic control}.
This methodology was recently proposed for the study of robust adaptive control algorithms through the lens of online convex optimization. As opposed to optimal and robust control, nonstochastic control studies adaptive control algorithms that minimize {\it regret}, or the difference in loss/reward of the controller vs. the best policy in hindsight from a reference class.  This is a game-theoretic performance metric, which is naturally applicable to agents interacting in a multi-agent environment. 


\subsection{Our result and techniques}

Our main result is a reduction from single agent to multi-agent control with provable regret guarantees. More precisely, we assume that we have access to single-agent controllers with the following guarantees:
\vspace{-2mm}
\begin{enumerate}
\item Each agent has sublinear regret vs. a class of policies under adversarial perturbations and cost functions.\vspace{-2mm}
\item Each agent should be able to evaluate their policy given the controls of the other agents. \vspace{-2mm}
\item The policy that each agent uses is independent of the controls that the other agents play. \vspace{-2mm}
\item The joint cost function over all agents needs to be jointly convex. \vspace{-2mm}
\end{enumerate}
This set of assumptions is well motivated: we review several well-studied control settings in the literature which provide  agents with such guarantees. Under these assumptions we give a reduction that takes regret minimizing control agents and guarantees low multi-agent regret of the joint control policy, w.r.t. the optimal joint policy in hindsight.

Let $\cA$ be a single agent control algorithm\footnote{More formally, it is an online convex optimization with memory algorithm, which we precisely define in coming sections.} that, given a state $x_t$, produces a control $u_t$, and suffers cost $c_t(x_t,u_t)$. The average (policy) regret of $\cA$ w.r.t. policy class $\Pi$ is defined to be the difference between its total cost and that of the best policy in hindsight, namely 
$$\regret_T(\cA) = \frac{1}{T} \sum_t c_t(x_t,u_t) - \min_{\pi \in \Pi} \frac{1}{T} \sum_t c_t(x_t^\pi,u_t^\pi) ~. $$
Notice that the comparator are the states $x_t^\pi$ and controls $u_t^\pi$ that would have been played by policy $\pi$. Then algorithm $\cC$, described in Algorithm box~\ref{algorithm:multiagent_ctrl_generic}, takes as an input $k$ such controllers, and under the aforementioned assumptions, controls a multi-agent system with the guarantee that 
\vspace{-2mm}
\begin{align*}
    \regret_T(\cC) \le  \sum_{i =1}^k \regret_T(\cA_i) + \tilde{O} \left( \frac{1}{T} + \eps \right) ~,
    \vspace{-2mm}
\end{align*}
where $\cA$ is the control algorithm for agent $i$, which is not necessarily the same for every agent, and $\eps$ is an upper bound on the error of each agent's policy evaluation.
Here $\regret_T(\cC)$ is the multi-agent regret of the joint policy, namely the difference between the cost of $\cC$, and that of the best joint-policy in hindsight. The reference policy class we use for comparison is the Cartesian product of the individual agent policy classes.  This is defined precisely in Section \ref{sec:control_v2}.

In order to obtain this result we study a generalization of online convex optimization to the multi-agent setting. Specifically, we propose a reduction from online convex optimization (OCO) to multi-agent OCO, such that the overall multi-agent regret is guaranteed to be the sum of the individual agents' regret on an induced loss function.

\subsection{Paper structure}

Section \ref{sec:setting} describes the formal setting in which we operate, as well as give basic definitions and assumptions. Section \ref{sec:multioco} provides the foundation of our result: a multi-agent generalization of online convex optimization, definition of multi-agent regret, and an efficient reduction from OCO to multi-agent OCO. In section \ref{sec:control_v2} we give our main result: a meta-algorithm that converts regret minimizing controllers into a joint multi-agent regret minimizing distributed policy. Experiments are shown in section \ref{sec:exp}. Some analysis is deferred to the Appendix. 

\subsection{Related work}

{\bf{Multi-agent RL.}}
Multi-agent RL \cite{sutton2018reinforcement} considers the problem of finding globally optimal joint policies through optimizing local policies. Common approaches for finding optimal policies in RL includes dynamic programming, Monte-Carlo methods \cite{williams1992simple}, temporal difference learning \cite{watkins1992q}, combining temporal difference and Monte-Carlo learning, evaluation strategies \cite{salimans2017evolution}, and policy gradient methods \cite{williams1992simple, mnih2016asynchronous}. Policy gradient methods, which are closest to our algorithm, compute stochastic gradients from trajectories \cite{sutton2000policy,sutton2018reinforcement}. More recently, \citet{jin2021v} provide a decentralized value-based method with game-theoretic equilibrium guarantees. Performance in multi-agent coordination significantly improves with agents' ability to predict the behaviour of other agents. In opponent modeling methods, wherein agents learn to model the behaviours of their partners, \cite{foerster2018learning} agents use counterfactual policies of other agents to update their policy parameters. In contrast, our approach decouples the agents' policies, forgoing the need to deal with counterfactuals of other agents. 

{\bf{Decentralized and Distributed control.}}
Communication-free decentralized control is the earliest multi-agent control model considered in the literature. In this model, each agent observes a different partial observation of the state and each agent provides its own control input to the system. This classic setting is well understood with exact characterizations of stabilizability \cite{wang1973} and controllability \cite{lefebvre1980}. More recently, decentralized control, also called distributed control, has been generalized to include a network layer of communication among the controllers, modeled by a network graph. Coordinated and cooperative control over a communication network has been studied using a wide range of approaches, such as distributed optimization \cite{Nedic2018}, model predictive control \cite{christofides2013distributed,sturz2020distributed}, and for a wide variety of problems, including stabilization of formations, coverage, and search \cite{Olfati2007,Bullo2009,Cao2013,Knorn2016}. 

One particular direction in this setting is on designing distributed observers, using information from neighbors to provide local estimates of the global state \cite{Olfati2005,ruggero2008DistributedKalman, matei2010consensusfilter, das2017consensusinnovations, park2017distlti}.  Such distributed observers can be applied to produce end-to-end control laws. One such approach stabilizes systems when the neighbor graph is strongly connected and the joint system is stabilizable \cite{wang2020distcontrol}.  In contrast to the present work, approaches to this problem are decentralized from an \emph{information} perspective, but not a \emph{game theoretic} perspective as controllers are designed centrally.

{\bf{Exploiting input redundancy in control.}}  Input redundancy in overactuated systems enables efficient responses \cite{sobelEigenstructureAssignment} and robustness to failures \cite{oppenheimer2006, Tohidi2017AdaptiveCA}. A number of optimization methods have been developed for redistributing desired control input under different faulty conditions. Our approach does not tackle this directly, but the action-centric rather than policy-centric philosophy of the algorithm leads to some robustness in such settings. 

{\bf{Online learning and online convex optimization.}} The framework of learning in games has been extensively studied as a model for learning in adversarial and nonstochastic environments  \cite{cesa2006prediction}. 
Online learning was infused with algorithmic techniques from mathematical optimization into the setting of  online convex optimization, see \cite{hazan2019introduction} for a comprehensive  introduction. We particularly make use of the property that no-regret algorithms converge to equilibrium, a phenomenon originally proposed by \cite{hart2000simple}.

{\bf{Online and nonstochastic control.}}
The starting point of our study are algorithms which enjoy sublinear regret for online control of dynamical systems; that is, whose performance tracks a given benchmark of policies up to a term which is vanishing relative to the problem horizon.  \cite{abbasi2011regret} initiated the study of online control under the regret benchmark for Linear Time-invariant (LTI) dynamical systems.
Our work instead adopts the \emph{nonstochastic control setting} \cite{agarwal2019online}, that allows for adversarially chosen (i.e. non-Gaussian and potentially adaptively chosen \cite{ghaiAdvDisturbance}) noise and costs that may vary with time.   The nonstochastic control model was extended to consider nonlinear and time-varying dynamics in  \cite{gradu2020adaptive,minasyan2021online,chen2021provable}.
See \cite{IcmlTutorial21} for a comprehensive survey of results and advances in online and nonstochastic control. 
\section{Problem Setting}\label{sec:setting}
\subsection{Notation}
The norm $\|\cdot \|$ refers to the $\ell_2$ norm for vectors and
spectral norm for matrices. For any natural number $n$, the set $[n]$ refers to the set $\{1, 2 \dots n\}$. 
We use the window notation $z_{a:b} \eqdef [z_a, \dots, z_{b}]$ to represent a sequence of
vectors. We use the convention where indices $i,j$ refer to agents, with superscripts
(typically) denoting an agent-specific quantity, while lack of a superscript denotes something global. 
We use the index $-i$ to represent information for all agents except $i$, for example $u^{-i}_t = (u^1_t \dots u^{i-1}_t, u^{i+1}_t \dots u^k_t)$.
\subsection{Multi-Agent Nonstochastic Control Problem}\label{sec:prob_setting}
We consider the following multi-agent control problem. Let $f_t: \RR^{d_x} \times \RR^{d_u} \rightarrow \RR^{d_x}$ and $g_i: \RR^{d_x} \rightarrow \RR^{d_{y_i}}$ define known time varying dynamics. Now, starting from state $x_0$, the system follows the dynamical equation 
\begin{align}
   y^i_{t} = g_i(x_{t}) + e^{i}_t, \label{eq:gen_dynamics} \quad x_{t+1} = f_t(x_t, u_t) + w_t,
\end{align}
where $x_t$ is the state, $u_t$ is the joint control consisting of $u^{i}_t \in \RR^{d_{u_i}}$ for each $i \in [k]$, $y^i_t$ is a partial observation, and $e^{i}_t, w_t$ are additive adversarial chosen disturbance terms. After committing to a control $u^i_t$, agent $i$ observes the controls of all other players $u^{-i}_t$ along with a convex cost $c_t: \RR^{d_x} \times \RR^{d_u} \rightarrow \RR$. Together, the $k$ agents suffer cost $c_t(x_t, u_t)$. The information model described above is diagrammed for a $2$-agent system in Figure.~\ref{fig:multiagent_system}. 

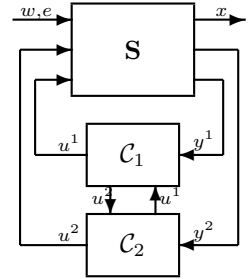
\begin{figure}[h]
\centering
\vskip7mm
\setlength{\unitlength}{0.1cm}
\begin{center}\begin{picture}(54,29)(0,31)
\thicklines
\put(24,54){\framebox(16,12){$\mathbf S$}}

\put(26,42){\framebox(12,8){$\cC_1 $}}
\put(26,30){\framebox(12,8){$\cC_2 $}}

\put(35, 38){\vector(0,1){4}}
\put(29, 42){\vector(0,-1){4}}
\put(35.5,39.5){$\scriptstyle u^1$}
\put(26.5,39.5){$ \scriptstyle u^2$}

\put(17,64.5){$\scriptstyle w, e$}
\put(16,64){\vector(1,0){8}}

\put(43,64.5){$\scriptstyle x$}
\put(40,64){\vector(1,0){7}}

\put(40,47){$\scriptstyle y^1$}
\put(44,56){\line(0,-1){10}}
\put(40,56){\line(1,0){4}}
\put(44,46){\vector(-1,0){6}}

\put(40,35){$\scriptstyle y^2$}
\put(40,60){\line(1,0){6}}
\put(46,60){\line(0,-1){26}}
\put(46,34){\vector(-1,0){8}}

\put(22,46.5){$\scriptstyle u^1$}
\put(26,46){\line(-1,0){7}}
\put(19,56){\vector(1,0){5}}
\put(19,46){\line(0,1){10}}

\put(22,34.5){$\scriptstyle u^2$}
\put(26,34){\line(-1,0){9}}
\put(17,34){\line(0,1){26}}
\put(17,60){\vector(1,0){7}}

\end{picture}\end{center}
\caption{Diagram of a $2$-agent System $\mathbf S$ executing dynamics from \eqref{eq:gen_dynamics} with agents $\cC_1, \cC_2$.}
\label{fig:multiagent_system}
\end{figure}
It should be noted that while each agent shares its action with other agents, it does not directly share its policy or its observation. Policy parameters might not be shareable due to memory/hardware restrictions (e.g. the policy is a neural net on a GPU), while lower dimensional controls may be less of a problem. Furthermore, some settings may involve local observations with private information, but with controls that are inherently public. It also is useful for seamlessly handling system failures as discussed in Section~\ref{sec:sys_fail}.

A control policy for agent $i$, 
$\pi^i:\bigcup_{s \in \NN} \RR^{s {d_{y_i}}} \times \RR^{(s-1) d_u} \rightarrow \RR^{d_u}$ is a mapping from all available information to a control $u^i_t = \pi^i(y_{1:t}, u_{1:{t-1}})$. A joint policy for the multi-agent setting is a collection of policies for each agent $\pi = \pi^{1:k}$. Policy classes are defined analogously with $\Pi^i$ being a set of agent $i$ policies and $\Pi$ being a set of joint policies. We often consider policies parameterized by vectors $\theta^i \subseteq \Theta_i$, where $\Theta_i$ is some bounded convex domain and use the parameterizations and policies interchangeably.

For time horizon $T$, performance of joint policy $\pi$ is measured by the average cost along the trajectory $(x^{\pi}_1, u^{\pi}_1 \dots)$,
$$J(\pi| w_{1:T}, e_{1:T}) = \frac{1}{T} \sum_{t=1}^T c_t(x^{\pi}_t, u^{\pi}_t)~.$$
 For a set of online control algorithms for each agent $\cC = \cC^{1:k}$, we are interested in the average (policy) \emph{regret} relative to some joint policy class $\Pi$, defined as 
\begin{align*}
  \regret_T(\cC) = J(\cC|w_{1:T}, e_{1:T}) - \inf_{\pi \in \Pi} J(\pi| w_{1:T}, e_{1:T})~.
\end{align*}
\vspace{-4mm}\subsection{Assumptions}
Converging towards optimal play for an online control algorithm may not happen immediately, so our system cannot explode too quickly. In order to bound the cost during this stage, $x_t$ must not explode in the absence of control. The corresponding outputs in the absence of any controller is a concept called \emph{Nature's $y$'s} in \cite{simchowitz2020improper}, which are an integral part of DAC policies, defined below. We analogously define \emph{Nature's $x$'s} here. 

\begin{definition}[Nature's $x$'s and $y$'s]
Given a sequence of disturbances $(w_t, e_t)_{t> 1}$ we define Nature's $x$'s and Nature's $y$'s respectively as the sequences $x^{\text{nat},i}_t, y^{\text{nat},i}_t$ where 
\begin{align*}
  y^{\text{nat}, i}_{t} = g_i(x_{t}) + e^{i}_t, \quad x^{\text{nat}}_{t+1} = f_t(x_t, 0) + w_t~.
\end{align*}
\end{definition}

\begin{assumption}[Bounded Nature's $x$'s]\label{assumption:bounded_nat_y}
We assume that $w_t$ and $e_t$ are chosen by an oblivious adversary,
and that $\|x_t^{\text{nat}}\| \le R_{\text{nat}}$ for all $t$.
\end{assumption}

We also need well behaved cost functions for learning:

\begin{assumption}\label{assumption:cost}
The costs $c_t(x, u)$ are convex and if $\|x\|, \|u\| \le D$ then there exists constant $C$
such that $0 \le c_t(x,u)) \le C D^2$~.
\end{assumption}

In control, the most ubiquitous cost functions are quadratics, which satisfy this assumption. Finally, we must restrict our joint policy class $\Pi$ in order to obtain global regret guarantees. We make the following assumption about each agent's policy class:

\begin{assumption}[{\bf Decoupling}] \label{assumption:dist_policy}
For any policy $\pi^i \in \Pi^i$, the policy can always be represented as a function of the disturbances $e^i_t, w_t$. In particular, there exists a function $\bar{\pi}:\bigcup_{s \in \NN} \RR^{(s-1) d_x} \times \RR^{s d_{y_i}} \rightarrow \RR^{d_u}$ for any set of controls $u_{1:t-1}$ and the resultant set of observations $y_{1:t}$, such that 
$\pi(y_{1:t}, u_{1:t-1}) = \bar{\pi}(w_{1:t-1}, e^{i}_{1:t})~.$
\end{assumption}

Note, that if Assumption~\ref{assumption:dist_policy} holds for each agent's policy class, then the joint policy class $\Pi$ will depend implicitly on only $w_{1:{t-1}}, e_{1:t}$. Policies that satisfy this assumption are special in that the controls played by one agent's policy are completely independent of another agent's policy, given the disturbances are chosen obliviously.

\subsection{Policy Classes}
We now introduce a few important policy classes that satisfy Assumption~\ref{assumption:dist_policy}.

{\bf{Disturbance-action Control.}}
For fully observed linear systems, disturbances $w_t$ can be derived from states and joint controls. This allows us to use Disturbance Action Control, a policy parameterization that is linear in the disturbance history.
\begin{definition}\label{DAC:def}
A Disturbance-action Controller (DAC) \cite{agarwal2019online}, $\pi(M)$, is specified by parameters $M \in \RR^{d_{u_i} \times m d_x}$ where the control $u^i_t = M w_{t-m:t-1}$.
\end{definition}
A comparator policy class $\Pi^i$ can be defined as a bounded convex set $\cM \subset \RR^{d_u \times m d_x}$ of policy matrices. It has been shown that DACs can approximate stable Linear Dynamic Controllers (LDCs), a powerful generalization of linear controllers that encompasses $\cH_2$ and $\cH_{\infty}$ optimal controllers under partial observation, and is a mainstay in control applications (see Definition~\ref{LDC:def} in Appendix~\ref{sec:LDC}). Furthermore, when costs are quadratic, it can be shown that a DAC also approximates the offline optimal policy \cite{goel2021competitive}.

{\bf{Disturbance-response Control.}}
Another policy class that suits our needs, especially if the system is partial observed, is that of Disturbance-Response Control (DRC). Unlike in the fully observed setting, with partial observations it is generally impossible for an agent to calculate $w_t$ for use in a policy. Instead, DRC controllers are a linear function of a window of previous Nature's $y$'s defined as follows:

\begin{definition}\label{DRC:def}
A Disturbance-response Controller (DRC) \cite{simchowitz2020improper}, $\pi(M)$, is specified by parameters $M \in \RR^{d_u \times m d_y}$ where the control $u^i_t = M y^{\text{nat}}_{t-m:t-1}$.
\end{definition}

Like DAC, DRC is a powerful control law which also can approximate stable LDCs. Furthermore, Nature's $y$'s are just a function of $w$'s and $e$'s so DRC policy classes satisfy Assumption~\ref{assumption:dist_policy}. 

\vspace{-3mm}\paragraph{Neural Network Policies}
We also consider neural network policies. In particular, fully-connected neural-networks with ReLU activations that act on normalized windows of $w$'s or Nature's $y$' were explored in \cite{chen2021provable}. While there are more restrictions for this class, neural-networks are a richer policy class than the linear counterparts DAC and DRC.

\section{Multiagent Online Convex Optimization With and Without Memory}\label{sec:multioco}

Our derivation henceforth for the  multi-agent control problem hinges upon a generalization of OCO to the  multi-agent setting. Now we describe this generalization, explain the challenge, and give a reduction from vanilla OCO, as well as OCO with Memory (OCO-M), to the  multi-agent settings.

In multiplayer OCO we have $k$ players and the $i$th learner iteratively 
chooses a vector from a convex set $\calK_i \subseteq \RR^d$. 
We denote the total number of rounds as $T$. In each
round, the $i$th learner commits to a decision $x^{i}_t \in \calK_i$. We define the joint decision $x_t = x^{1:k}_t\in \calK = \calK^{1:k}$. Afterwards, the learner incurs a loss $\ell_t(x_t)$ where $\ell_t$ is a convex loss function $\ell_t: \calK \rightarrow \RR$. The learner observers $\ell^i_t: \calK_i \rightarrow \RR$ defined by $\ell^i_t(x) =\ell_t(x, x^{-i}_{t})$, where we use the notation $(x, x^{-i}_{t})$ to represent $(x^1_{t}, \dots x^{i-1}_t, x, x^{i+1}_t, \dots x^{k}_t)$.

Regret is defined to be the total loss incurred by the algorithm with respect
to the loss of the best fixed single prediction found in hindsight for each player. Formally,
the average {\bf  multi-agent regret } of a set of learning algorithms $\calA_{1:k} = \calA_1, \dots \calA_k$ is defined as
\begin{align*}
    \regret_T(\calA_{1:k}) &\eqdef
    \sup_{\ell_1 \dots \ell_t} \bigg \{\frac{1}{T}\sum_{t=1}^T\ell_t(x_t) -
    \min_{x^{*}\in \calK}\frac{1}{T}\sum_{t=1}^T\ell_t(x^{*}) \bigg\} ~.
\end{align*}

Our goal is to create a black-box mechanism that takes as input regret minimizing strategies, and allows for joint-regret minimization with minimal or no central coordination.  

\subsection{Challenges of  multi-agent OCO} \label{rmk:linearlization}

A natural first attempt for joint regret minimization in  multi-agent OCO is to allow each regret minimizing agent to operate on its own on the given loss function.

In this subsection we show that this naive strategy fails.
As an example, consider the simple scalar two player game.  Let $\ell_t(x^1,x^2) = (x^1-x^2)^2 + 0.1 \|(x^1,x^2)\|_2^2$ for all $t$. Now we consider $\calA_1$ and $\calA_2$ to be algorithms that each alternate between playing $1$ on odd $t$ and $-1$ on even $t$.  
In this setup, the online learners see losses:
\begin{align*}
    \ell_t^1(z) = \ell_t^2(z)=\begin{cases}
          (z-1)^2 + 0.1 z^2 +0.1   & t  \ \text{is odd} \\
          (z+1)^2 + 0.1 z^2 + 0.1 &  t \ \text{is even}
     \end{cases}~
\end{align*}
By design of $\calA_1$ and $\calA_2$, predictions alternate between $(1,1)$ and $(-1, -1)$ and $\ell_t^1(x^1_t)= \ell_t^2(x^2_t)=0.2$. In contrast, the best fixed decision in hindsight is $0$ for both players, suffering loss $\ell_t^1(0)= \ell_t^2(0)=1.1$ for all $t$, so each player has \emph{negative} regret. However, for the multiplayer game, the best loss in hindsight is $0$ by playing $x^1_t = x^2_t=0$ for all $t$, so the multi-agent regret is $0.2 T$ . We note that this phenomenon is well known in the game theory literature, where failure of no-regret algorithms to converge to Nash equilibrium has been studied \cite{blum2008regret}. Our study extends this observation to the dynamic games setting.

\subsection{Reduction from OCO to  multi-agent OCO}

To overcome the lower bound of the previous section, we choose to apply the reduction from  multi-agent to single agent OCO via the instantaneous linearization. This simple change makes all the difference, and allows for provable sublinear  multi-agent regret, as we now prove. 

\begin{algorithm}
  \caption{Multiplayer OCO}
    \label{algorithm:multioco}
\begin{algorithmic}
    \STATE Input: OCO algorithm $\calA_i$, convex domain $\calK_i \subseteq \RR^{d}$\\
    \FOR {$t=1$  {\bfseries to} $T$}
    \STATE Learner $\calA_i$ predicts $x^i_t$
    \STATE Calculate $g^{i}_t =  \nabla \ell^{i}_t(x^{i}_{t})$
    \STATE Feed $\calA_i$ linear loss function $\ip{g^{i}_t}{\cdot}$
    \ENDFOR
\end{algorithmic}
\end{algorithm}

\begin{theorem}
    Suppose for each $i \in [k]$, OCO algorithm $\calA_i$ has regret $\regret_T(\calA_i)$ over $\calK_i \subseteq \RR^d$ with linear loss functions provided by Alg.~\ref{algorithm:multioco}, then the  multi-agent regret of Alg.~\ref{algorithm:multioco} is upper bounded by $\regret_T(\calA_{1:k}) \le \sum_{i=1}^k \regret_T(\calA_i)$.
\end{theorem}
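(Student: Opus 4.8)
The plan is to fix an arbitrary loss sequence $\ell_1, \dots, \ell_T$ and an arbitrary joint comparator $x^* = x^{*,1:k} \in \calK$, and to bound $\frac1T \sum_t [\ell_t(x_t) - \ell_t(x^*)]$ by $\sum_{i=1}^k \regret_T(\calA_i)$; since the resulting bound depends on neither the sequence nor the comparator, taking the minimum over $x^*$ and the supremum over loss sequences recovers the definition of multi-agent regret and yields the claim. The entire argument rests on the instantaneous linearization performed by Algorithm~\ref{algorithm:multioco}: because each $\ell_t$ is jointly convex, the first-order (gradient) inequality gives $\ell_t(x_t) - \ell_t(x^*) \le \langle \nabla \ell_t(x_t), x_t - x^* \rangle$, which converts the hard-to-control nonlinear gap into an inner product that splits cleanly across the $k$ coordinate blocks.

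The key identity to verify is that the gradient $g^i_t = \nabla \ell^i_t(x^i_t)$ fed to agent $i$ coincides with the $i$-th block of the joint gradient $\nabla \ell_t(x_t)$. This holds because $\ell^i_t(\cdot) = \ell_t(\,\cdot\,, x^{-i}_t)$ freezes every other agent's decision at its realized value $x^{-i}_t$, so differentiating in the $i$-th argument and evaluating at $x^i_t$ reproduces exactly the partial gradient $\nabla_{x^i} \ell_t(x_t)$. Consequently $\langle \nabla \ell_t(x_t), x_t - x^* \rangle = \sum_{i=1}^k \langle g^i_t, x^i_t - x^{*,i} \rangle$, and summing the convexity bound over $t$ decomposes the joint regret exactly into $k$ single-agent linear-loss regret terms.

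Each inner term $\frac1T \sum_t \langle g^i_t, x^i_t - x^{*,i} \rangle$ is precisely the regret of $\calA_i$ on the linear loss sequence $\langle g^i_t, \cdot \rangle$ measured against the fixed competitor $x^{*,i} \in \calK_i$. Since $\regret_T(\calA_i)$ bounds the regret of $\calA_i$ against the \emph{best} fixed point in hindsight, and $x^{*,i}$ is one admissible competitor, each term is at most $\regret_T(\calA_i)$; summing over $i$ gives $\frac1T\sum_t[\ell_t(x_t)-\ell_t(x^*)] \le \sum_{i=1}^k \regret_T(\calA_i)$. Because the right-hand side is a worst-case bound holding for \emph{any} linear loss sequence, and the induced sequences $\langle g^i_t, \cdot\rangle$ are themselves some admissible linear losses, the inequality survives the outer supremum over $\ell_1, \dots, \ell_T$.

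I do not anticipate a genuine technical obstacle; the substance lies entirely in the choice of what to feed each agent. The one point requiring care is conceptual rather than computational: the counterexample of Section~\ref{rmk:linearlization} shows that feeding each agent its full induced loss $\ell^i_t$ fails, so it is essential to invoke joint convexity \emph{globally} through a single gradient inequality on $\ell_t$ (exploiting the block identity $g^i_t = \nabla_{x^i}\ell_t(x_t)$) rather than arguing agent-by-agent on the $\ell^i_t$. One should also note that $\nabla \ell_t(x_t)$ exists, or else pass to an arbitrary subgradient, which is immediate under the convexity assumption and leaves the argument unchanged.
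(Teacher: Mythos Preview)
Your proposal is correct and follows essentially the same approach as the paper: apply the first-order convexity inequality to the joint loss $\ell_t$, use the block identity $g^i_t = \nabla_{x^i}\ell_t(x_t)$ to split the linearized term across agents, and bound each summand by the individual regret $\regret_T(\calA_i)$. Your write-up is in fact slightly more careful than the paper's (you note explicitly that $x^{*,i}$ is merely one admissible competitor, so the inequality goes the right way, and you address the subgradient/differentiability point), but the substance is identical.
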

\begin{proof}
Let $\bar{x} \in \argmin_{x\in \calK}\sum_{t=1}^T\ell_t(x)$ with $\bar{x}^i$ the component corresponding to player $i$. By the convexity of $\ell_t$,
\begin{align*}
   \regret_T(\calA_{1:k}) = \sum_{t=1}^T\left(\ell_t(x_t) -
   \ell_t(\bar{x})\right) \le \sum_{t=1}^T \nabla \ell_t(x_t)^{\top}(x_t - \bar{x})~.
\end{align*}
Now we note that we can decompose $\nabla \ell_t$ by player, so
\begin{align*}
    &
    \nabla\ell_t(x_t)^{\top} = 
           [\nabla\ell^1_t(x^1_t)^{\top}
           \dots
           \nabla\ell_t^k(x^k_t)^{\top}]
           \\
          &
          \Rightarrow \nabla \ell_t(x_t)^{\top}(x_t - \bar{x}) = \sum_{i=1}^k \nabla \ell^{i}_t(x^{i}_t)^{\top}(x^i_t - \bar{x}^{i})~.
\end{align*}
Now we note that, $\regret_T(\calA_i) = \sum_{t=1}^T\nabla \ell^{i}_t(x^{i}_t)^{\top}(x^i_t - \bar{x}^{i})$ and the result follows.
\end{proof}

\subsection{Multiplayer OCO with Memory}\label{sec:multioco_mem}

The control setting which motivates our whole study is inherently state-based. Therefore we need to extend the multi-agent OCO to allow for memory.

This is similar to the previous section but the loss functions now have \emph{memory} of the $h$ previous controls. After each of the $k$ online learners commits to the joint decision $x_t$ as defined in the previous section. The learners incur a loss $\ell_t(x_{t-h:t})$ where $\ell_t$ is a convex loss function $\ell_t: \calK^{h+1} \rightarrow \RR$. The learner observers $\ell^i_t: \calK_i^{h+1} \rightarrow \RR$ by $\ell^i_t(z) =\ell_t(z, x^{-i}_{t-h:t})$.

Regret is defined to be the total loss incurred by the algorithm with respect
to the loss of the best fixed single prediction found in hindsight for each player. Formally,
the average regret of a set of learning algorithms $\calA_{1:k}$ is
\begin{align*}
    \regret_T(\calA_{1:k}) &\eqdef
    \sup_{\ell_1 \dots \ell_t} \bigg \{\frac{1}{T}\big(\sum_{t=1}^T\ell_t(x_{t-h:t}) -
    \min_{x^{*}\in \calK}\sum_{t=1}^T\bar{\ell_t}(x^{*})\big) \bigg\} 
\end{align*}
where $\bar{\ell_t}(x) = \ell_t(x, \dots x)$.

\begin{algorithm}
 \caption{Multiplayer OCO with Memory}
    \label{algorithm:multioco_mem}
   
\begin{algorithmic}
    \STATE Input: OCO-M algorithm $\calA_i$, convex domain $\calK_i \subseteq \RR^{d}$ and memory length $h$\\
    \FOR {$t=1$  {\bfseries to} $T$}
    \STATE Learner $\calA_i$ predicts $x^i_t$
    \STATE Calculate $g^{i}_t =  \nabla \ell^{i}_t(x^{i}_{t-h:t})$
    \STATE Feed $\calA_i$ linear loss function $\ip{g^{i}_t}{\cdot}$
    \ENDFOR
\end{algorithmic}
\end{algorithm}

The main result of this section is given by the following theorem, which is formally proved in the appendix. The proof follows the memoryless setting in the previous subsection. 
\begin{theorem}\label{thm:multi_ocomem}
    Suppose for each $i \in [k]$ OCO-M algorithm with memory $\calA_i$ has regret $\regret_T(\calA_i)$ over $\calK_i \subseteq \RR^d$ with linear loss functions provided by Alg.~\ref{algorithm:multioco_mem}, then the regret of Alg.~\ref{algorithm:multioco_mem} has regret upper bounded by
    \begin{align*}
        \regret_T(\calA_{1:k}) \le \sum_{i=1}^k \regret_T(\calA_i)~.
    \end{align*}
\end{theorem}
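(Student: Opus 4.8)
The plan is to mirror the memoryless argument of the preceding subsection, replacing single decisions by $(h+1)$-windows and the comparator $\ell_t$ by its diagonalization $\bar{\ell}_t$. First I would fix a global comparator $\bar{x} \in \argmin_{x^* \in \calK}\sum_{t=1}^T \bar{\ell}_t(x^*)$, write its per-player components as $\bar{x}^i$, and note that the diagonal window $(\bar{x},\dots,\bar{x}) \in \calK^{h+1}$ is the relevant comparator point. Since each $\ell_t$ is convex on $\calK^{h+1}$ and $\ell_t(\bar{x},\dots,\bar{x}) = \bar{\ell}_t(\bar{x})$, the first-order inequality gives
\begin{align*}
\sum_{t=1}^T \ell_t(x_{t-h:t}) - \sum_{t=1}^T \bar{\ell}_t(\bar{x}) \le \sum_{t=1}^T \nabla \ell_t(x_{t-h:t})^{\top}\big(x_{t-h:t} - (\bar{x},\dots,\bar{x})\big)~,
\end{align*}
which reduces the whole problem, exactly as before, to controlling a linearized sum.

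The second step is the gradient decomposition, now over a two-dimensional index, namely the time slot within the window crossed with the player. The window $x_{t-h:t}$ is a block vector whose blocks are indexed by $(s,i)$ with $s \in \{t-h,\dots,t\}$ and $i \in [k]$; collecting the player-$i$ blocks gives precisely the window $x^i_{t-h:t}$ fed to $\calA_i$. The key identity to verify is that the player-$i$ block of $\nabla \ell_t(x_{t-h:t})$ coincides with $g^i_t = \nabla \ell^i_t(x^i_{t-h:t})$; this holds because $\ell^i_t(z) = \ell_t(z, x^{-i}_{t-h:t})$ is obtained from $\ell_t$ by freezing the other players' slots at the values actually played, so differentiating in the player-$i$ slots at the evaluation point is unaffected by the freezing. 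Hence
\begin{align*}
\nabla \ell_t(x_{t-h:t})^{\top}\big(x_{t-h:t} - (\bar{x},\dots,\bar{x})\big) = \sum_{i=1}^k (g^i_t)^{\top}\big(x^i_{t-h:t} - (\bar{x}^i,\dots,\bar{x}^i)\big)~.
\end{align*}

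Finally, I would identify each per-player sum with the single-agent OCO-M regret of $\calA_i$. Because $\calA_i$ receives the linear memory loss $\ip{g^i_t}{\cdot}$, its regret against the diagonal comparator equals $\sum_t \ip{g^i_t}{x^i_{t-h:t}} - \min_{x^* \in \calK_i}\sum_t \ip{g^i_t}{(x^*,\dots,x^*)}$; since $\bar{x}^i \in \calK_i$ is feasible, the comparator loss at $\bar{x}^i$ is at least the minimal comparator loss, so the per-player sum is bounded above by $\regret_T(\calA_i)$. Summing over $i$, dividing by $T$, and taking the supremum over loss sequences yields $\regret_T(\calA_{1:k}) \le \sum_{i=1}^k \regret_T(\calA_i)$.

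The main obstacle I anticipate is bookkeeping rather than conceptual: getting the window-by-player decomposition of $\nabla \ell_t$ exactly right, and confirming that the single-agent guarantee being invoked is the OCO-M (policy) regret measured against the \emph{diagonal} comparator $\bar{\ell}_t$, not a naive per-step comparator. In particular one must check that no separate iterate-movement or stability term appears in this reduction: the entire memory overhead is absorbed inside each $\regret_T(\calA_i)$, precisely because both the multi-agent regret and each single-agent regret are measured against diagonalized losses. Establishing this alignment of comparators is the only place where genuine care is needed.
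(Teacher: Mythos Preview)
Your proposal is correct and follows essentially the same route as the paper: convexity of $\ell_t$ to linearize against the diagonal comparator $(\bar{x},\dots,\bar{x})$, block decomposition of the gradient by player using $\ell^i_t(\cdot)=\ell_t(\cdot,x^{-i}_{t-h:t})$, and identification of each per-player linear sum with $\regret_T(\calA_i)$. If anything, your final step is slightly more careful than the paper's, which writes $\regret_T(\calA_i)$ as an equality with the sum evaluated at the specific $\bar{x}^i$ rather than the minimizing $x^*\in\calK_i$; your inequality via feasibility of $\bar{x}^i$ is the cleaner statement.
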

\vspace{-4mm} See Appendix~\ref{sec:additional_proofs} for proof.

\section{The Meta-Algorithm and its Analysis}\label{sec:control_v2}

This section contains our main result: a meta-algorithm for multi-agent nonstochastic control. We start by defining the precise requirements from the single agent controllers that are given as input, and assumptions on them. We then proceed to describe the meta-algorithm and its main performance guarantee on the multi-agent regret. We proceed to describe a few special cases of interest. 

\subsection{Assumptions and definitions}

The single agent control algorithms that are the basis for the reduction need to satisfy following requirements:
\vspace{-3mm}
\begin{itemize}
    \item Each agent should be able to evaluate their policy given the controls of the other agents. 
    \item The joint cost function over all agents is jointly convex. 
    \item Agent policies need to be decoupled in the sense of Assumption~\ref{assumption:dist_policy}.
\end{itemize}
\vspace{-3mm}We now specify these requirements more formally. 

We introduce two related notions of a $(\eps, h)$-policy evaluation oracle (PEO) for a multi-agent control problem.  A joint PEO allows us to counterfactually evaluate a complete multi-agent policy, while a local PEO allows us to counterfactually evaluate the policy of a single agent, fixing the remainder of the controls. $\eps$ is the accuracy required of this oracle, while $h$ is something like a mixing time such that the cost of the $t$th state can be well approximated regardless of the history before $t-h$.  

Given the available control information $u_{1:t-1}$, observations
$y^i_{1:t}$, and cost function $c_t$, a loss function $\ell_t$ is a joint $(\eps, H)$-PEO, if $\ell_t(\theta_{1:H}^{1:k})$ provides an $\eps$ approximate estimate of
$c_t(\tilde{x}_t, \tilde{u}_t)$ where $\tilde{x}_t$ is the counterfactual state had
$\tilde{u}_{t-r+1}$ been played according to joint policy $\pi_{\theta^{1:k}_r}$.

Likewise,
$\ell^i_t$ is the local PEO of the $i$th agent, if $\ell^{i}_t(\theta_{1:h})$ provides an
$\eps$ approximate estimate of $c_t(\tilde{x}_t, \tilde{u}_t)$ where $\tilde{u}_t$ and
$\tilde{x}_t$ are the counterfactual state and observation had $u^{i}_{t-r+1}$ been played
according to policy $\pi_{\theta_r}$ while all other controls $u^{-i}_{1:t-1}$ remain the
same. These oracles are defined formally in Definition~\ref{def:policy_eval_oracle} in Appendix~\ref{sec:control_app}.

\begin{remark}\label{rmk:multi_pobs}
In order for a local agent to be able to provide an accurate estimate of the cost using only partial observations and controls, intuitively the cost may need to be restricted in in its dependence on the fully observed state.  For example, if all agents' partial observations contain certain features from the state (e.g. the $x$-$y$-$z$ position of the center of gravity of drone but no other velocity or angular features), the cost can depend on these and an agent PEO may exist.
\end{remark}

A key observation is that for policies satisfying Assumption~\ref{assumption:dist_policy}, if 
$u^{-i}_{t-h+1:t}$ are generated by policies $\theta^{-i}_{t-h+1:t}$ respectively, then a local PEO and a joint PEO can be related via $\ell^i_t = \ell_t(\cdot, \theta^{-i}_{t:t-h+1})$.  Because all controls are determined directly from the disturbances, changes in agent $i$'s controls have no impact on the controls of other agents.  

We now assume that beyond an initial burn-in time $T_b$, we can construct such an $(\eps,h)$-local PEO from information available to a controller.  We also must guarantee that the PEOs we use are convex in order to use OCO analysis.

\begin{assumption}[Information]\label{assumption:burn-in}
For $t \ge T_b > h$, for each agent $i$, there exists a functional $\cL^i$ such that $\ell^i_t = \cL^i(c_t, u_{1:t}, y^i_{1:t})$ is a $(\eps,h)$-local PEO for policy class $\Theta_i$. 
\end{assumption}

\begin{assumption}[Convexity]\label{assumption:convexity}
There exists a $(\eps, h)$ joint PEO $\ell_t$ that is convex such that $\ell^i_t = \cL^i(c_t, u_{1:t}, y^i_{1:t})$ satisfies $\ell^i_t = \ell_t(\cdot, \theta^{-i}_{t-h+1:t})$.
\end{assumption}
\subsection{Multi-Agent Control Meta-Algorithm and Analysis }

Our reduction is described in Algorithm \ref{algorithm:multiagent_ctrl_generic}.  It accepts as input OCO-M algorithms $\cA_i$ with requirements specified before, and applies them sequentially using the linearization technique of the previous section. 

\begin{algorithm}[!htp]
\caption{Multi-Agent Nonstochastic Control
\label{algorithm:multiagent_ctrl_generic}}
\begin{algorithmic}[1]
\STATE \textbf{Input:} Horizon length $T$, burn in time $T_b$, policy evaluation horizon $h$, and for each $i \in [k]$, OCO-M algorithms $\calA_i$ over policy parameterization class $\Theta^i$, policy evaluation functional $\cL^i$ .
\STATE Initialize $\theta^i_0 =\dots = \theta^i_{T_b}$ as a policy that always plays control $0$. 
\FOR {$t=1$ {\bfseries to} $T$}
\FOR {$i=1$ {\bfseries to} $k$}
\STATE Observe $y^i_{t}$
\STATE Play control $u^i_t = \pi_{\theta^i_t}(u_{1:{t-1}}, y^{i}_{1:t})$
\STATE Observe controls from other agents to form joint control $u_{t}= u^{1:k}_t$ 
\STATE Observe cost $c_t$ 
\IF{$t > T_b$}
\STATE Construct policy evaluation oracle $\ell^i_t = \cL^i(c_t, u_{1:{t}}, y^i_{1:t})$
\STATE Compute $g^i_t = \nabla \ell_t^i(\theta^i_{t-h, t})$
\STATE Feed $\calA_i$ linear loss function  $\ip{g^{i}_t}{\cdot}$ and receive policy parameterization $\theta^i_{t+1}$.
\ENDIF
\ENDFOR 
Pay $c_t(x_{t}, u_{t})$
\ENDFOR
\end{algorithmic}
\end{algorithm}

The inner loop of the algorithm starting at Line 5 is what is performed by a local agent.  The information available in this scope is the local observation and the controls of the other agents.  We note that Alg.~\ref{algorithm:multiagent_ctrl_generic} only uses $\ell^i_t$ instead of $\ell_t$, which is only used for analysis.  This is fundamental as each agent is unaware of the policies of other agents.  

The performance guarantee for this meta-algorithm is given by:

\begin{theorem}\label{thm:ctrl_regret}
Suppose Assumptions~\ref{assumption:bounded_nat_y},~\ref{assumption:cost},~\ref{assumption:dist_policy} hold and Assumptions~\ref{assumption:burn-in}, ~\ref{assumption:convexity} holds with burn-in $T_b$, horizon $h$ and error $\eps$. If $\cA_i$ are OCO-M algorithms with regret $\regret_T(\cA_i)$ on linear loss functions provided by Algorithm~\ref{algorithm:multiagent_ctrl_generic} over policy parameterization class $\Theta^i$, then the  multi-agent regret of   Algorithm~\ref{algorithm:multiagent_ctrl_generic}  is bounded by 
\begin{align*}
    \regret_T(\cC) \le  \sum_{i =1}^k \regret_T(\cA_i) + \frac{T_b C R_{\text{nat}}^2}{T} + 2\eps ~.
\end{align*}
\end{theorem}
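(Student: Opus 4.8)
The plan is to split the horizon at the burn-in time $T_b$, bound the two regimes separately, and stitch the post-burn-in regime onto the multi-agent OCO-M guarantee of Theorem~\ref{thm:multi_ocomem} through the policy evaluation oracles. First I would dispose of the burn-in cost. For $t \le T_b$ the algorithm plays $u_t = 0$, so the realized state is exactly Nature's state $x_t = x_t^{\text{nat}}$; Assumption~\ref{assumption:bounded_nat_y} gives $\|x_t\| \le R_{\text{nat}}$, and since $\|u_t\| = 0$, Assumption~\ref{assumption:cost} yields $c_t(x_t,u_t) \le C R_{\text{nat}}^2$. Summing over the $T_b$ burn-in steps and dividing by $T$ produces the $\frac{T_b C R_{\text{nat}}^2}{T}$ term.

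Next, for $t > T_b$ I would pass to the oracles. Writing $\theta^{1:k}_t$ for the parameters actually played at round $t$, the realized controls over the window $t-h+1:t$ are generated by exactly these parameters, so the joint $(\eps,h)$-PEO of Assumption~\ref{assumption:convexity} gives the one-sided estimate $c_t(x_t,u_t) \le \ell_t(\theta^{1:k}_{t-h:t}) + \eps$. The key structural fact, which follows from the decoupling Assumption~\ref{assumption:dist_policy} together with Assumption~\ref{assumption:convexity}, is that the local oracle satisfies $\ell^i_t = \ell_t(\cdot, \theta^{-i}_{t-h+1:t})$: because every agent's controls are functions of the disturbances alone, perturbing agent $i$'s policy leaves the other agents' controls untouched, so freezing $\theta^{-i}$ is precisely the counterfactual the local oracle evaluates. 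This supplies exactly the data needed for the multi-agent OCO-M reduction on the convex losses $\ell_t$, since Algorithm~\ref{algorithm:multiagent_ctrl_generic} feeds each $\cA_i$ the linearized local loss $\langle g^i_t, \cdot\rangle$ with $g^i_t = \nabla \ell^i_t(\theta^i_{t-h:t})$, matching Algorithm~\ref{algorithm:multioco_mem}. Theorem~\ref{thm:multi_ocomem} then bounds $\sum_{t>T_b}\ell_t(\theta^{1:k}_{t-h:t}) - \min_{\theta^*\in\Theta^{1:k}}\sum_{t>T_b}\bar\ell_t(\theta^*)$ by the cumulative regrets of the $\cA_i$ summed over the $T-T_b$ rounds on which the learners are actually updated.

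It remains to relate the OCO-M comparator to the best joint policy in hindsight. Let $\pi^*$ attain $\inf_{\pi\in\Pi}J(\pi)$ with parameters $\theta^*\in\Theta^{1:k}$; since the reference class is the Cartesian product of the agents' classes, $\theta^*$ is admissible in the minimum above. The diagonal loss $\bar\ell_t(\theta^*)=\ell_t(\theta^*,\dots,\theta^*)$ is the PEO evaluation of the single policy $\pi^*$ applied across the whole window, so the other direction of the $(\eps,h)$-PEO guarantee gives $\bar\ell_t(\theta^*)\le c_t(x^{\pi^*}_t,u^{\pi^*}_t)+\eps$. Chaining the PEO upper bound on the realized cost, the OCO-M regret bound, and the PEO upper bound on the comparator, and using nonnegativity of $c_t$ to extend the comparator sum back over the burn-in rounds, I obtain $\sum_{t=1}^T c_t(x_t,u_t) \le T_b C R_{\text{nat}}^2 + \sum_{t=1}^T c_t(x^{\pi^*}_t,u^{\pi^*}_t) + 2(T-T_b)\eps + T\sum_{i}\regret_T(\cA_i)$, where the last term uses the convention that $\regret_T$ is normalized by the horizon. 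Dividing by $T$ and bounding $(T-T_b)/T\le 1$ yields $\regret_T(\cC) \le \sum_i \regret_T(\cA_i) + \frac{T_b C R_{\text{nat}}^2}{T} + 2\eps$, as claimed.

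I would flag the PEO approximation step as the main obstacle. One must argue carefully that evaluating the oracle on the actually-played parameters reproduces the realized cost up to $\eps$, and — for the comparator — that the diagonal evaluation $\bar\ell_t(\theta^*)$ reproduces $\pi^*$'s genuine trajectory cost up to $\eps$. Both rely on the horizon $h$ washing out pre-window history and on the decoupling Assumption~\ref{assumption:dist_policy} guaranteeing that the frozen controls $\theta^{-i}$ appearing in the local oracle coincide with what those agents would truly play, independent of any deviation by agent $i$; this independence is what makes the per-agent linearized updates aggregate into a guarantee on the joint cost.
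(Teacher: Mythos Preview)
Your proposal is correct and follows essentially the same approach as the paper: the same four-term decomposition into burn-in cost, algorithm truncation error (your PEO upper bound on the realized cost), multi-agent $\ell$ policy regret (handled via Theorem~\ref{thm:multi_ocomem}), and comparator truncation error (your PEO upper bound on $\bar\ell_t(\theta^*)$), with nonnegativity of $c_t$ used to discard the comparator's burn-in contribution. Your explicit emphasis on how Assumption~\ref{assumption:dist_policy} makes $\ell^i_t = \ell_t(\cdot,\theta^{-i}_{t-h+1:t})$ is exactly the point the paper leans on to invoke Theorem~\ref{thm:multi_ocomem}.
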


\begin{proof}
In order to analyze the regret we begin with the following regret decomposition.  
We first let $\theta^{*} = \argmin_{\theta \in \Theta} \sum_{t=1}^T c_t(y^{\pi_{\theta}}_t, u^{\pi_{\theta}}_t)$
and let $\pi^{*} = \pi_{\theta^{*}}$ denote offline optimal joint policy from our comparator policy class.
\begin{align}
    &\regret_T(\cC) = \frac{1}{T} \sum_{t=1}^T c_t(x^{\cC}_t, u^{\cC}_t) - \frac{1}{T}\sum_{t=1}^T c_t(x^{\pi^{*}}_t, u^{\pi^{*}}_t) \label{eq:regret_decomp_gen}\\
    &\le \underbrace{\bigg(\frac{1}{T}\sum_{t=1}^{T_b} c_t(x^{\text{nat}}_t, 0) - \sum_{t=1}^{T_b} c_t(x^{\pi_{\theta^{*}}}_t, u^{\pi_{\theta^{*}}}_t)\bigg)}_{\text{burn-in cost}}\nonumber\\
    &+\underbrace{\bigg(\frac{1}{T}\sum_{t=T_b + 1}^T c_t(x^{\cC}_t, u^{\cC}_t) - \frac{1}{T}\sum_{t=T_b+1}^T \ell_t(\theta_{t:t-h})\bigg)}_{\text{algorithm truncation error}} \nonumber\\
    &+ \underbrace{\bigg(\frac{1}{T}\sum_{t=T_b+1}^T \ell_t(\theta_{t:t-h}) - \frac{1}{T}\sum_{t=T_b+1}^T \ell_t(\theta^{*}, \dots, \theta^{*})\bigg)}_{\text{multi-agent $\ell$ policy regret}} \nonumber \\
    &+ \underbrace{\bigg(\frac{1}{T}\sum_{t=T_b+1}^T \ell_t(\theta^{*}, \dots, \theta^{*}) - \frac{1}{T}\sum_{t=T_b+1}^T c_t(x^{\pi_{\theta^{*}}}_t, u^{\pi_{\theta^{*}}}_t \bigg)}_{\text{comparator truncation error}}\nonumber
\end{align}

The first term of the decomposition is exactly the cost of the first $T_b$ iterations because the Alg.~\ref{algorithm:multiagent_ctrl_generic} starts with $T_b$ zero controls so by definition $x^{\cC}_t$ are Nature's $x$'s.  By Assumption~\ref{assumption:cost}, losses are nonnegative so we have $-\sum_{t=1}^{T_b} c_t(x^{\pi_{\theta^{*}}}_t, u^{\pi_{\theta}}_t) \leq 0$ so we drop these terms.
By combining Assumptions~\ref{assumption:cost} and \ref{assumption:bounded_nat_y} we bound the burn-in cost as
\begin{align*}
    \frac{1}{T}\sum_{t=1}^{T_b} c_t(x^{\text{nat}}_t, 0) \le \frac{C}{T}\sum_{t=1}^{T_b}\|x_t\|^2 \le \frac{C T_b R^2_{\text{nat}}}{T}~.
\end{align*}

The comparator truncation error is bounded by $\eps$ because for $t > T_b$, $|\ell_t(\theta^{*}, \dots, \theta^{*}) - c_t(x^{\pi_{\theta^{*}}}_t, u^{\pi_{\theta^{*}}}_t)| \le \eps$  by definition of an $(\eps,h)$ joint PEO.  Similarly, since $\cC$ produces $x^{\cC}_t$ with controls $u_s$ from joint policy $\theta_s$ for the preceding $h$ controls, the algorithm truncation error is also bounded by $\eps$.

To bound the $\ell_t$ multi-agent policy regret term, we note that
Algorithm~\ref{algorithm:multiagent_ctrl_generic} chooses $\theta_t$ exactly by the mechanism
in Algorithm~\ref{algorithm:multioco_mem} with loss functions $\ell^i_t$. By
Assumption~\ref{assumption:burn-in}, $\ell^i_t= \ell_t(\cdot, \theta^{-i}_{t-h:t-1})$ and
by Assumption~\ref{assumption:convexity}, $\ell_t$ is convex as desired. As such, we bound the $\ell_t$ multi-agent policy regret term
using Theorem~\ref{thm:multi_ocomem} as
\begin{align*}
\sum_{t=T_b}^T \ell_t(\theta_{t:t-h}) - \sum_{t=T_b+1}^T \ell_t(\theta^{*}, \dots, \theta^{*}) \le T\sum_{i =1}^k \regret_T(\cA_i) ~.
\end{align*}

The result follows after combining the above bounds.
\end{proof}

\vspace{-2mm}\subsection{Applications to Linear Dynamical Systems}
Algorithm~\ref{algorithm:multiagent_ctrl_generic} leaves most of the technical challenge to a policy evaluation oracle. It's not immediately obvious that such a function exists and is efficient to compute. Fortunately, recent work from nonstochastic control provides exactly  what we need for Linear Dynamical Systems with a variety of policy classes.

For simplicity of presentation, we consider only stable systems. However, we note that in settings with a single shared observation $y_t$, these algorithms can be extended to work for unstable systems.  In order to do this, all agents must be aware of baseline stabilizing controllers employed by each agent, and Algorithm~\ref{algorithm:multiagent_ctrl_generic} can then be applied on the stable closed-loop system including the stabilizing component. For a thorough treatment of this approach, we refer the reader to Appendix G of \cite{simchowitz2020improper}.

We assume strong stability, which guarantees spectral radius of $A$ strictly less than $1$. Strong stability assures us that impact of disturbances and controls from the far history have a negligible impact on the current state.  This geometric decay enables the construction of PEO's with horizon $h$ just logarithmic in $T$ while providing $\eps = \frac{1}{T}$ approximation error.

\paragraph{Fully Observed LDS with Disturbance Action Control}
We consider the case of a fully observed LDS  defined as follows:
\vspace{-4mm}\begin{align*}
    y_{t+1} = x_{t+1} = f_t(x_t,u_t) + w_t = A x_t + B u_t + w_t~, 
\end{align*}
with bounded adversarial disturbances $w_t$, known strongly stable dynamics, and well behaved convex costs $c_t$ satisfying Assumption~\ref{assumption:cost}\footnote{In general, we may actually need more assumptions on the costs, and the size of our comparator policy classes as $\|g^i_t\|$ in Line 11 of Algorithm~\ref{algorithm:multiagent_ctrl_generic} depends on these pieces, and hence so does $\regret_T(\cA_i)$.}.

We now show that a linear system with a DAC policy class $\cM$ can satisfy assumptions required for Theorem~\ref{thm:ctrl_regret}.  We already saw that Assumption~\ref{assumption:dist_policy} holds for DAC policies.  Next we show that Assumption~\ref{assumption:bounded_nat_y} holds. By fully unrolling dynamics, Nature's $x$'s can be written as a linear combination of disturbances $x^{\text{nat}}_t = \sum_{s=0}^{t-1} A^s w_{t-s}$.

Now because $w_t$ are bounded, and $A$ is strongly stable, $\|x^{\text{nat}}_t\|$ can be bounded by a convergent geometric series. The last piece is constructing a PEO for a DAC policy.  Unrolling dynamics, we express states in terms of controls:
\vspace{-1mm}
\begin{align*}
x_t &\approx x^{\text{nat}}_t + [B, AB, \dots A^{h-1} B]u_{t-1:t-h} = {\hat{G}}u_{t-1:t-h}~.
\end{align*}
The truncation follows from strong stability of the dynamics.  Decomposing $\hat{G}$, our Markov operator by agent, we can write
$
    x_t \approx x^{\text{nat}}_t + \sum_{i=1}^k \hat{G}_i u^{i}_{t-1:t-h}.
$ Now to compute $\ell^i_t(M_{1:h}) = c_t(\tilde{x}(M_{1:h}), M w_{t-m:t-1})$ we hold $u^{-i}_s$ fixed while changing the policies for agent $i$.  Because the counterfactual state $\tilde{x}(M_{1:h})$ is affine in $ u^{i}_{t-1:t-h}$ and by the DAC policy parameterization agent $i$'s controls are linear, the counterfactual state can be approximated as linear in $M_{1:h}$. Composing with a convex cost $c_t$ gives a convex PEO $\ell^i_t$.  A complete analysis in the single agent setting can be found in \cite{agarwal2019online}.  In order to compute $\ell^i_t(M_{1:h})$, the disturbances $w_t$ are needed.  Because agent $i$ has access to the joint control $u_t$, disturbances can be computed as $w_t = x_{t+1} - Ax_t - Bu_t$.  Once $T_b = m+h$ disturbances are observed, $\ell^i_t$ can be computed.

Similar ideas can be applied using Nature's $y$'s and DRC to handle partial observations. Furthermore, Theorem~\ref{thm:ctrl_regret} can be applied with richer neural network policy classes as well.  For a more complete discussion, see Appendix~\ref{sec:LDS_applications}.

\vspace{-3mm}\section{Experiments} \label{sec:exp}

\vspace{-2mm}\subsection{Robustness to System Failures}\label{sec:sys_fail}
Our algorithm guarantee can also be extended to handle situations where certain agents no longer are locally regret minimizing (e.g. an actuator breaks).  Because Algorithm~\ref{algorithm:multiagent_ctrl_generic} is agnostic to the policy played by other players, if we want to bound the regret for a subset $I \subset [k]$ of controllers, we can just view the policy class of the remaining agents as singleton open-loop policies that always play the same controls and so Theorem~\ref{thm:ctrl_regret} will be a regret guarantee with respect to just the agents $I$ with the remaining controls obliviously fixed.

Such robust guarantees are not achieved with existing approaches in nonstochastic control.  Typically, as in the Gradient Perturbation Controller (GPC) \cite{agarwal2019online}, a PEO $\ell_t(\theta_{1:h})$ is passed to an online learning algorithm.  If the system breaks down and certain controls are not played according to the chosen policy, there will be a mismatch between the true cost and the PEO.  In contrast, because $\ell^i_t$ uses the actual controls played by the algorithm, for an agent $i$ that is not experiencing failure, the oracle accuracy remains unchanged and provides the correct gradient signal.

As such, Algorithm~\ref{algorithm:multiagent_ctrl_generic} may be useful to provide robustness even for a single agent system with some failures. 

\vspace{-2mm}\subsection{ADMIRE experiments}
In Fig.~\ref{fig:admire} we run GPC, LQR control, and a linear $\cH_{\inf}$ control policy, along with Multi-Agent GPC (MAGPC), which is Algorithm~\ref{algorithm:multiagent_ctrl_generic} using gradient descent as the online learners $\cA^i$. These are used to stabilize the ADMIRE overactuated aircraft model \cite{harkegaard2005resolving} in the presence of $3$ different disturbance profiles: Gaussian noise, a random walk, and a sinusoidal disturbance pattern. We run the algorithms on the complete system, and with the $4$th controller forced inactive by zeroing out the control to demonstrate the robustness of our approach.  Figure~\ref{fig:admire} demonstrates the GPC failure described in Section~\ref{sec:sys_fail}.  In the random walk experiment, GPC explodes, but MAGPC stabilizes the system. See Appendix~\ref{sec:admire} for full details.
We note that, when a control failure does not occur and learning rates are sufficiently small, GPC and MAGPC are essentially the same algorithm. This is because the policy parameterizations moves slowly so $\ell^i_t(\theta_{t-h:t}) \approx \ell^i_t(\theta_t \dots \theta_t)$, which is GPC's proxy loss.

\begin{figure}
    \centering
    \begin{subfigure}[b]{\linewidth}
    \centering
    \includegraphics[width=\linewidth]{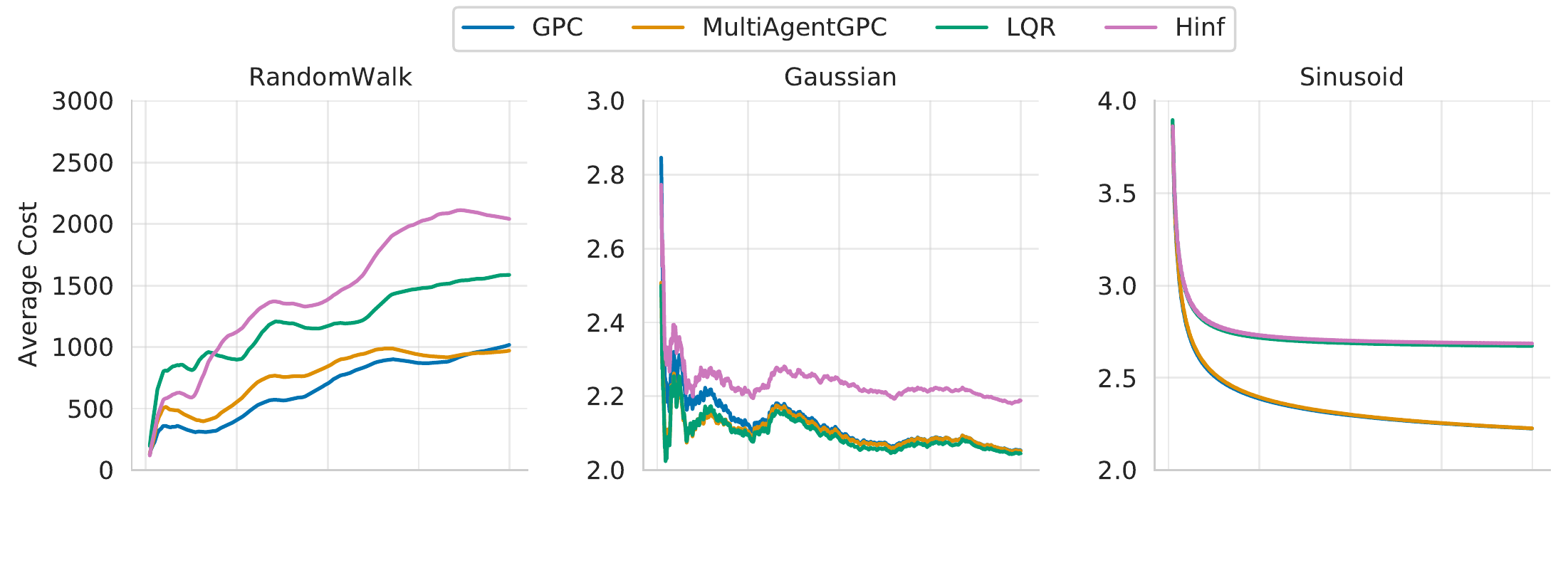}
    \vspace{-25pt}
    \end{subfigure}
    \begin{subfigure}[b]{\linewidth}
    \centering
    \includegraphics[width=\linewidth]{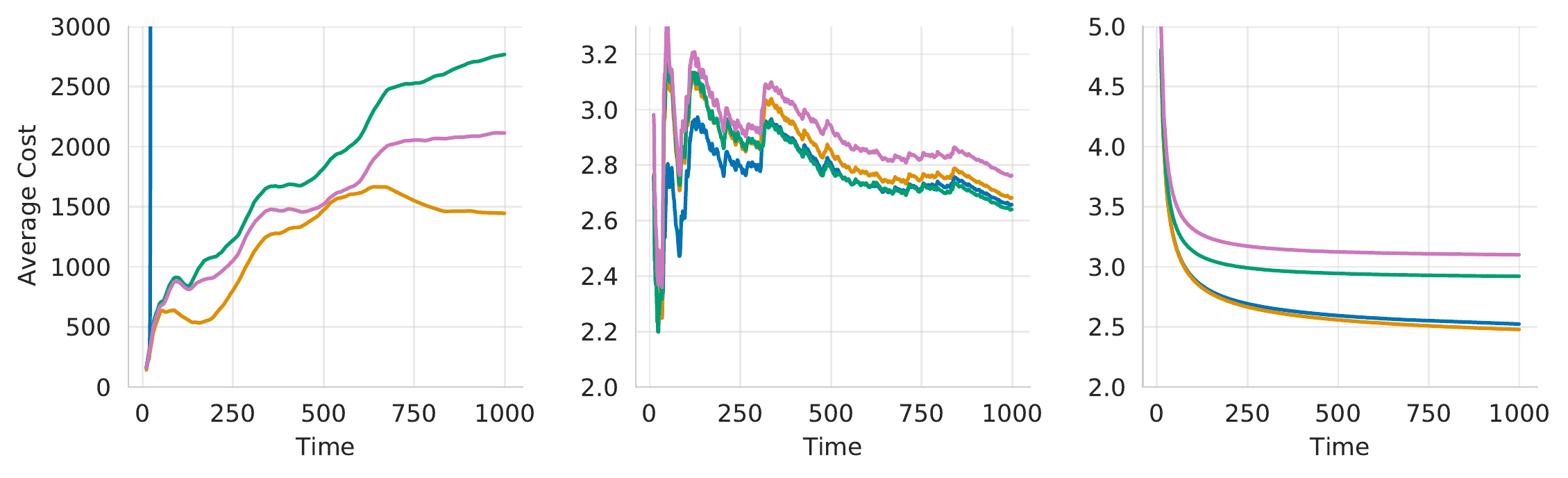}
    \vspace{-20pt}
    \end{subfigure}
    \caption{Cost of GPC, MultiAgentGPC, $\cH_2$ and $\cH_{\infty}$ control before and after the $4$th controller becomes inactive in the presence of gaussian disturbances, disturbances generated by a random walk, and sinusoidal disturbances.}
    \label{fig:admire}
     \vspace{-15pt}
\end{figure}
\section{Discussion and Conclusions}

\paragraph{Conclusions}
We have described a new approach for regret minimization in multi-agent control that is based on a new proposed performance metric: multi-agent regret. This measures the difference in cost between that of a distributed control system and that of the best joint policy from a reference class. We give a reduction that takes any regret minimizing controller and converts it into a regret minimizing multi-agent distributed controller.

\vspace{-1mm}\paragraph{Future directions}
The most interesting question is whether similar metrics of performance and ideas can be applied to general multi-agent reinforcement learning (MARL), since here we exploit full knowledge of the dynamics. It will be interesting to see if our information model can be relaxed and still allow sublinear multi-agent regret.
It also remains to  explore the extent of robustness obtained from a multi-agent regret minimizing algorithm.







\bibliography{icml_main}

\begin{thebibliography}{40}
\providecommand{\natexlab}[1]{#1}
\providecommand{\url}[1]{\texttt{#1}}
\expandafter\ifx\csname urlstyle\endcsname\relax
  \providecommand{\doi}[1]{doi: #1}\else
  \providecommand{\doi}{doi: \begingroup \urlstyle{rm}\Url}\fi

\bibitem[Abbasi-Yadkori \& Szepesv{\'a}ri(2011)Abbasi-Yadkori and
  Szepesv{\'a}ri]{abbasi2011regret}
Abbasi-Yadkori, Y. and Szepesv{\'a}ri, C.
\newblock Regret bounds for the adaptive control of linear quadratic systems.
\newblock In \emph{Proceedings of the 24th Annual Conference on Learning
  Theory}, pp.\  1--26, 2011.

\bibitem[Agarwal et~al.(2019)Agarwal, Bullins, Hazan, Kakade, and
  Singh]{agarwal2019online}
Agarwal, N., Bullins, B., Hazan, E., Kakade, S., and Singh, K.
\newblock Online control with adversarial disturbances.
\newblock In \emph{International Conference on Machine Learning}, pp.\
  111--119. PMLR, 2019.

\bibitem[Blum et~al.(2008)Blum, Hajiaghayi, Ligett, and Roth]{blum2008regret}
Blum, A., Hajiaghayi, M., Ligett, K., and Roth, A.
\newblock Regret minimization and the price of total anarchy.
\newblock pp.\  373--382, 05 2008.
\newblock \doi{10.1145/1374376.1374430}.

\bibitem[Bullo et~al.(2009)Bullo, Cortes, and Martinez]{Bullo2009}
Bullo, F., Cortes, J., and Martinez, S.
\newblock \emph{Distributed control of robotic networks}.
\newblock Princeton University Press, 2009.

\bibitem[Cao et~al.(2013)Cao, Yu, Ren, and Chen]{Cao2013}
Cao, Y., Yu, W., Ren, W., and Chen, G.
\newblock An overview of recent progress in the study of distributed
  multi-agent coordination.
\newblock \emph{IEEE Transactions on Industrial Informatics}, 9\penalty0
  (1):\penalty0 427--438, 2013.
\newblock \doi{10.1109/TII.2012.2219061}.

\bibitem[Carli et~al.(2008)Carli, Chiuso, Schenato, and
  Zampieri]{ruggero2008DistributedKalman}
Carli, R., Chiuso, A., Schenato, L., and Zampieri, S.
\newblock Distributed kalman filtering based on consensus strategies.
\newblock \emph{IEEE Journal on Selected Areas in Communications}, 26\penalty0
  (4):\penalty0 622--633, 2008.
\newblock \doi{10.1109/JSAC.2008.080505}.

\bibitem[Cesa-Bianchi \& Lugosi(2006)Cesa-Bianchi and
  Lugosi]{cesa2006prediction}
Cesa-Bianchi, N. and Lugosi, G.
\newblock \emph{Prediction, learning, and games}.
\newblock Cambridge university press, 2006.

\bibitem[Chen et~al.(2021)Chen, Minasyan, Lee, and Hazan]{chen2021provable}
Chen, X., Minasyan, E., Lee, J.~D., and Hazan, E.
\newblock Provable regret bounds for deep online learning and control, 2021.

\bibitem[Christofides et~al.(2013)Christofides, Scattolini, de~la Pena, and
  Liu]{christofides2013distributed}
Christofides, P.~D., Scattolini, R., de~la Pena, D.~M., and Liu, J.
\newblock Distributed model predictive control: A tutorial review and future
  research directions.
\newblock \emph{Computers \& Chemical Engineering}, 51:\penalty0 21--41, 2013.

\bibitem[Das \& Moura(2017)Das and Moura]{das2017consensusinnovations}
Das, S. and Moura, J. M.~F.
\newblock Consensus+innovations distributed kalman filter with optimized gains.
\newblock \emph{IEEE Transactions on Signal Processing}, 65\penalty0
  (2):\penalty0 467--481, 2017.
\newblock \doi{10.1109/TSP.2016.2617827}.

\bibitem[Foerster et~al.(2018)Foerster, Chen, Al-Shedivat, Whiteson, Abbeel,
  and Mordatch]{foerster2018learning}
Foerster, J., Chen, R.~Y., Al-Shedivat, M., Whiteson, S., Abbeel, P., and
  Mordatch, I.
\newblock Learning with opponent-learning awareness.
\newblock In \emph{Proceedings of the 17th International Conference on
  Autonomous Agents and MultiAgent Systems}, pp.\  122--130, 2018.

\bibitem[Ghai et~al.(2021)Ghai, Snyder, Majumdar, and
  Hazan]{ghaiAdvDisturbance}
Ghai, U., Snyder, D., Majumdar, A., and Hazan, E.
\newblock Generating adversarial disturbances for controller verification.
\newblock In \emph{Proceedings of the 3rd Conference on Learning for Dynamics
  and Control}, volume 144 of \emph{Proceedings of Machine Learning Research},
  pp.\  1192--1204. PMLR, 07 -- 08 June 2021.

\bibitem[Goel \& Hassibi(2021)Goel and Hassibi]{goel2021competitive}
Goel, G. and Hassibi, B.
\newblock Competitive control.
\newblock \emph{arXiv preprint arXiv:2107.13657}, 2021.

\bibitem[Gradu et~al.(2020)Gradu, Hazan, and Minasyan]{gradu2020adaptive}
Gradu, P., Hazan, E., and Minasyan, E.
\newblock Adaptive regret for control of time-varying dynamics.
\newblock \emph{arXiv preprint arXiv:2007.04393}, 2020.

\bibitem[H{\"a}rkeg{\aa}rd \& Glad(2005)H{\"a}rkeg{\aa}rd and
  Glad]{harkegaard2005resolving}
H{\"a}rkeg{\aa}rd, O. and Glad, S.~T.
\newblock Resolving actuator redundancy—optimal control vs. control
  allocation.
\newblock \emph{Automatica}, 41\penalty0 (1):\penalty0 137--144, 2005.

\bibitem[Hart \& Mas-Colell(2000)Hart and Mas-Colell]{hart2000simple}
Hart, S. and Mas-Colell, A.
\newblock A simple adaptive procedure leading to correlated equilibrium.
\newblock \emph{Econometrica}, 68\penalty0 (5):\penalty0 1127--1150, 2000.

\bibitem[Hazan(2019)]{hazan2019introduction}
Hazan, E.
\newblock Introduction to online convex optimization.
\newblock \emph{arXiv preprint arXiv:1909.05207}, 2019.

\bibitem[Hazan \& Singh(2021)Hazan and Singh]{IcmlTutorial21}
Hazan, E. and Singh, K.
\newblock Tutorial: online and non-stochastic control, July 2021.

\bibitem[Jin et~al.(2021)Jin, Liu, Wang, and Yu]{jin2021v}
Jin, C., Liu, Q., Wang, Y., and Yu, T.
\newblock V-learning--a simple, efficient, decentralized algorithm for
  multiagent rl.
\newblock \emph{arXiv preprint arXiv:2110.14555}, 2021.

\bibitem[Knorn et~al.(2016)Knorn, Chen, and Middleton]{Knorn2016}
Knorn, S., Chen, Z., and Middleton, R.~H.
\newblock Overview: Collective control of multiagent systems.
\newblock \emph{IEEE Transactions on Control of Network Systems}, 3\penalty0
  (4):\penalty0 334--347, 2016.
\newblock \doi{10.1109/TCNS.2015.2468991}.

\bibitem[Lefebvre et~al.(1980)Lefebvre, Richter, and DeCarlo]{lefebvre1980}
Lefebvre, S., Richter, S., and DeCarlo, R.
\newblock Decentralized control of linear interconnected multivariable systems.
\newblock In \emph{1980 19th IEEE Conference on Decision and Control including
  the Symposium on Adaptive Processes}, pp.\  525--529, 1980.
\newblock \doi{10.1109/CDC.1980.271852}.

\bibitem[Matei \& Baras(2010)Matei and Baras]{matei2010consensusfilter}
Matei, I. and Baras, J.~S.
\newblock Consensus-based distributed linear filtering.
\newblock In \emph{49th IEEE Conference on Decision and Control (CDC)}, pp.\
  7009--7014, 2010.
\newblock \doi{10.1109/CDC.2010.5718072}.

\bibitem[Minasyan et~al.(2021)Minasyan, Gradu, Simchowitz, and
  Hazan]{minasyan2021online}
Minasyan, E., Gradu, P., Simchowitz, M., and Hazan, E.
\newblock Online control of unknown time-varying dynamical systems.
\newblock \emph{Advances in Neural Information Processing Systems}, 34, 2021.

\bibitem[Mnih et~al.(2016)Mnih, Badia, Mirza, Graves, Lillicrap, Harley,
  Silver, and Kavukcuoglu]{mnih2016asynchronous}
Mnih, V., Badia, A.~P., Mirza, M., Graves, A., Lillicrap, T., Harley, T.,
  Silver, D., and Kavukcuoglu, K.
\newblock Asynchronous methods for deep reinforcement learning.
\newblock In \emph{International conference on machine learning}, pp.\
  1928--1937. PMLR, 2016.

\bibitem[Nedić et~al.(2018)Nedić, Olshevsky, and Rabbat]{Nedic2018}
Nedić, A., Olshevsky, A., and Rabbat, M.~G.
\newblock Network topology and communication-computation tradeoffs in
  decentralized optimization.
\newblock \emph{Proceedings of the IEEE}, 106\penalty0 (5):\penalty0 953--976,
  2018.
\newblock \doi{10.1109/JPROC.2018.2817461}.

\bibitem[Olfati-Saber(2005)]{Olfati2005}
Olfati-Saber, R.
\newblock Distributed kalman filter with embedded consensus filters.
\newblock In \emph{Proceedings of the 44th IEEE Conference on Decision and
  Control}, pp.\  8179--8184, 2005.
\newblock \doi{10.1109/CDC.2005.1583486}.

\bibitem[Olfati-Saber et~al.(2007)Olfati-Saber, Fax, and Murray]{Olfati2007}
Olfati-Saber, R., Fax, J.~A., and Murray, R.~M.
\newblock Consensus and cooperation in networked multi-agent systems.
\newblock \emph{Proceedings of the IEEE}, 95\penalty0 (1):\penalty0 215--233,
  2007.
\newblock \doi{10.1109/JPROC.2006.887293}.

\bibitem[Oppenheimer et~al.(2006)Oppenheimer, Doman, and
  Bolender]{oppenheimer2006}
Oppenheimer, M.~W., Doman, D.~B., and Bolender, M.~A.
\newblock Control allocation for over-actuated systems.
\newblock In \emph{2006 14th Mediterranean Conference on Control and
  Automation}, pp.\  1--6, 2006.
\newblock \doi{10.1109/MED.2006.328750}.

\bibitem[Park \& Martins(2017)Park and Martins]{park2017distlti}
Park, S. and Martins, N.~C.
\newblock Design of distributed lti observers for state omniscience.
\newblock \emph{IEEE Transactions on Automatic Control}, 62\penalty0
  (2):\penalty0 561--576, 2017.
\newblock \doi{10.1109/TAC.2016.2560766}.

\bibitem[Salimans et~al.(2017)Salimans, Ho, Chen, Sidor, and
  Sutskever]{salimans2017evolution}
Salimans, T., Ho, J., Chen, X., Sidor, S., and Sutskever, I.
\newblock Evolution strategies as a scalable alternative to reinforcement
  learning.
\newblock \emph{arXiv preprint arXiv:1703.03864}, 2017.

\bibitem[Simchowitz et~al.(2020)Simchowitz, Singh, and
  Hazan]{simchowitz2020improper}
Simchowitz, M., Singh, K., and Hazan, E.
\newblock Improper learning for non-stochastic control.
\newblock In \emph{Conference on Learning Theory}, pp.\  3320--3436. PMLR,
  2020.

\bibitem[Sobel \& Shapiro(1985)Sobel and
  Shapiro]{sobelEigenstructureAssignment}
Sobel, K. and Shapiro, E.
\newblock Eigenstructure assignment for design of multimode flight control
  systems.
\newblock \emph{IEEE Control Systems Magazine}, 5\penalty0 (2):\penalty0 9--15,
  1985.
\newblock \doi{10.1109/MCS.1985.1104941}.

\bibitem[St{\"u}rz et~al.(2020)St{\"u}rz, Zhu, Rosolia, Johansson, and
  Borrelli]{sturz2020distributed}
St{\"u}rz, Y.~R., Zhu, E.~L., Rosolia, U., Johansson, K.~H., and Borrelli, F.
\newblock Distributed learning model predictive control for linear systems.
\newblock In \emph{2020 59th IEEE Conference on Decision and Control (CDC)},
  pp.\  4366--4373. IEEE, 2020.

\bibitem[Sutton \& Barto(2018)Sutton and Barto]{sutton2018reinforcement}
Sutton, R.~S. and Barto, A.~G.
\newblock \emph{Reinforcement learning: An introduction}.
\newblock MIT press, 2018.

\bibitem[Sutton et~al.(2000)Sutton, McAllester, Singh, and
  Mansour]{sutton2000policy}
Sutton, R.~S., McAllester, D.~A., Singh, S.~P., and Mansour, Y.
\newblock Policy gradient methods for reinforcement learning with function
  approximation.
\newblock In \emph{Advances in neural information processing systems}, pp.\
  1057--1063, 2000.

\bibitem[Tohidi et~al.(2017)Tohidi, Yildiz, and
  Kolmanovsky]{Tohidi2017AdaptiveCA}
Tohidi, S.~S., Yildiz, Y., and Kolmanovsky, I.~V.
\newblock Adaptive control allocation for over-actuated systems with actuator
  saturation.
\newblock \emph{IFAC-PapersOnLine}, 50:\penalty0 5492--5497, 2017.

\bibitem[Wang et~al.(2020)Wang, Fullmer, Liu, and Morse]{wang2020distcontrol}
Wang, L., Fullmer, D., Liu, F., and Morse, A.~S.
\newblock Distributed control of linear multi-channel systems: Summary of
  results.
\newblock In \emph{2020 American Control Conference (ACC)}, pp.\  4576--4581,
  2020.
\newblock \doi{10.23919/ACC45564.2020.9148043}.

\bibitem[Wang \& Davison(1973)Wang and Davison]{wang1973}
Wang, S.-H. and Davison, E.
\newblock On the stabilization of decentralized control systems.
\newblock \emph{IEEE Transactions on Automatic Control}, 18\penalty0
  (5):\penalty0 473--478, 1973.
\newblock \doi{10.1109/TAC.1973.1100362}.

\bibitem[Watkins \& Dayan(1992)Watkins and Dayan]{watkins1992q}
Watkins, C.~J. and Dayan, P.
\newblock Q-learning.
\newblock \emph{Machine learning}, 8\penalty0 (3-4):\penalty0 279--292, 1992.

\bibitem[Williams(1992)]{williams1992simple}
Williams, R.~J.
\newblock Simple statistical gradient-following algorithms for connectionist
  reinforcement learning.
\newblock \emph{Machine learning}, 8\penalty0 (3):\penalty0 229--256, 1992.

\end{thebibliography}
\bibliographystyle{icml2022}

\newpage
\appendix
\onecolumn
\section{Deferred Technical Material}\label{sec:additional_proofs}
\subsection{Proof of Theorem~\ref{thm:multi_ocomem}}
We provide a proof of Theorem~\ref{thm:multi_ocomem}, for the multi-agent regret of an OCO-M algorithm.
\begin{proof}
Let $\bar{x} \in \argmin_{x\in \calK}\sum_{t=1}^T\ell_t(x)$ with $\bar{x}^i$ the component corresponding to player $i$.  Now by the convexity of $\ell_t$,
\begin{align*}
   \regret_T(\calA_1, \dots \calA_k) = \sum_{t=1}^T\ell_t(x_{t-h:t}) -
   \sum_{t=1}^T\ell_t(\bar{x}, \dots \bar{x}) \le \sum_{t=1}^T \nabla \ell_t(x_{t-h:t})^{\top}(x_{t-h:t} - (\bar{x},\dots,\bar{x}))~.
\end{align*}

Now we note that we can decompose $\nabla \ell_t$ by player (after reorganizing coordinates), so
\begin{align*}
           \nabla \ell_t(x_{t-h:t}) =[\nabla\ell^1_t(x^1_{t-h:t})^{\top},
           \nabla\ell^2_t(x^2_{t-h:t})^{\top} ,\dots
           \nabla\ell_t^k(x^k_{t-h:t})^{\top}]\\
         \Rightarrow \nabla \ell_t(x_{t-h:t})^{\top}(x_{t-h:t} - (\bar{x},\dots,\bar{x})) = \sum_{i=1}^k \nabla \ell_t(x^{i}_{t-h:t})^{\top}(x^i_{t-h:t} - (\bar{x}^{i}, \dots ,\bar{x}^{i}))~.
\end{align*}
Now we note that, $\regret_T(\calA_i) = \sum_{t=1}^T\nabla \ell_t(x^{i}_{t-h:t})^{\top}(x^i_{t-h:t} - (\bar{x}^{i}, \dots ,\bar{x}^{i}))$ and the result follows.
\end{proof}
\subsection{Linear Dynamical Controller}\label{sec:LDC}
\begin{definition}[Linear Dynamic Controller]
\label{LDC:def}
A \emph{linear dynamic controller} $\pi$ is a linear dynamical system 
$(A_{\pi}, B_{\pi}, C_{\pi}, D_{\pi})$ with internal state $s_t \in \RR^{d_{\pi}}$, input $x_t \in \RR^{d_{x}}$, and output $u_t \in \RR^{d_u}$ that satisfies
\begin{align*}
 s_{t+1} = A_{\pi} s_t + B_{\pi} x_t,~ u_{t} = C_{\pi} s_t + D_{\pi} x_t \ .
\end{align*}
\end{definition}
Linear dynamical controllers involve an internal linear dynamical system to produce a control.  The combination of a Kalman Filter with Optimal control on the state estimates is a classic example.
\subsection{Technical Details from Section~\ref{sec:control_v2}}\label{sec:control_app}
Below are the formal definition of joint and local policy evaluation oracles:
\begin{definition}\label{def:policy_eval_oracle}
$\ell_t:\Theta^h \rightarrow \RR$ is an $(\eps, h)$-joint PEO if 
$|\ell_t(\theta_{1:h}^{1:k}) - c_t(\tilde{x}_t, \tilde{u}_t)| \le \eps$
where $(\tilde{x}_{1:t-h}, \tilde{y}_{1:t-h}, \tilde{u}_{1:t-h-1}) = (x_{1:t-h}, y_{1:t-h}, u_{1:t-h-1})$  and for $s> t-h$
\begin{align}
\tilde{u}_{s} &= \pi_{\theta^{1:k}_{t-s+1}}(\tilde{y}_{1:s}, \tilde{u}_{1:s-1}),\label{eq:joint_eval_rollout}\\
    \tilde{x}_{s+1} &= f_s(\tilde{x}_t, \tilde{u}_s) + w_s, \quad 
    \forall i \in [k], \tilde{y}^i_{s+1} = g_i(\tilde{x}_{s+1}) + e^i_{s+1} \nonumber~. 
\end{align}

$\ell^i_t:\Theta^h_i \rightarrow \RR$ is an $(\eps, h)$-local PEO for agent $i$ if 
$|\ell^i_t(\theta_{1:h}) - c_t(\tilde{x}_t, \tilde{u}_t)| \le \eps$
where $(\tilde{x}_{1:t-h}, \tilde{y}^i_{1:t-h}, \tilde{u}_{1:t-h-1}) = (x_{1:t-h}, y^i_{1:t-h}, u_{1:t-h-1})$, $\tilde{u}^{-i}_{t-h+1:t} = u^{-i}_{t-h+1:t}$  and for $s> t-h$
\begin{align}
\tilde{u}^{i}_{s} &= \pi_{\theta^{i}_{t-s+1}}(\tilde{y}^i_{1:s}, \tilde{u}_{1:s-1}), \label{eq:eval_rollout}\\
    \tilde{x}_{s+1} &= f_s(\tilde{x}_t, \tilde{u}_s) + w_s, \quad 
    \tilde{y}^i_{s+1} = g_i(\tilde{x}_{s+1}) + e^i_{s+1} \nonumber~. 
\end{align}
\end{definition}
\section{More Applications for Linear Dynamical Systems}\label{sec:LDS_applications}
\paragraph{Partially Observed LDS with Disturbance Response Control}
Another setting that we can capture with out meta-algorithm is partially observed linear dynamical systems, where all agents share the same partial observation. In this case, the dynamics are as follows,
\begin{align*}
    y_{t} &= g(x_{t}) + e_t = Cx_{t} + e_{t},\\
    x_{t+1} &= f_t(x_t,u_t) + w_t = A x_t + B u_t + w_t~,
\end{align*}
with bounded adversarial disturbances $e_t, w_t$ and known strongly stable dynamics.  For a partially observed system with adversarial disturbances, it is known that many states can be consistent with the observations and so costs must be restricted to be well defined for the agents.  This is solved by using convex costs of the \emph{observation} $c_t: \RR^{d_y} \times \RR^{d_u}$, so at time $t$ the cost incurred is $c_t(y_t, u_t)$.  Because $y_t$ is an affine function of $x_t$, this cost is still convex in $x$ to fit the general setting.

A policy class, that suits our needs is that of Disturbance-Response Control (DRC). The satisfaction of the assumptions of Algorithm~\ref{algorithm:multiagent_ctrl_generic} follows from similar arguments to DAC in the fully observable case. In particular, to compute PEO $\ell^i_t$, the following approximate Markov operator is used
\begin{align} \label{eq:partial_markove}
    y_t &\approx y^{\text{nat}}_t + \sum_{i=1}^k \hat{G} u^{i}_{t-1:t-h}\\
        \hat{G}&=[CB, CAB, \dots CA^{h-1} B]~. \nonumber
\end{align}

In the above equation $\hat{G}_i$ is the block of the Markov operator $\hat{G}$ corresponding to the $i$th agents controls. Beyond this, the core ideas are the same, just replacing disturbances with Nature's $y$'s. The last piece necessary is computing Nature's $y$'s.  This is not challenging given we have access to all agent's controls, so we can let $y^{\text{nat}}_{t} \approx y_t - \hat{G}_i u^{i}_{t-1:t-h}$.  For a complete analysis of this approach in the single agent setting, refer to \cite{simchowitz2020improper}.

\paragraph{Locally Partially Observed LDS with Disturbance Response Control}
An extension of the prior setting of interest is for each agent to have it's own local partial observation without needing to share it in the protocol.  In particular, consider the multi-agent system
\begin{align*}
    x_{t+1} &= f_t(x_t,u_t, w_t) = A x_t + B u_t + w_t,\\
    y^{i}_{t+1} &= g_i(x_{t+1}, e_t) = C^ix_{t+1} + e^{i}_{t+1}~,
\end{align*}

with bounded adversarial disturbances $e^{i}_t, w_t$ and known strongly stable dynamics. As noted in Remark~\ref{rmk:multi_pobs}, in order for each agent to have a local PEO, the cost $c_t(x_t, u_t)$ needs to be computable with only local information at each agent. Therefore, we assume there are local cost functions $c^i_t:\RR^{d_{y_i}} \times \RR^{d_u}$ for each agent such that
for all $i \in [k], c_t(x, u) = c^i_t(C^i x + e^i_t, u_t)$.

With access to the global loss in each agent, agent $i$ can use DRC with $y^{\text{nat}, i}_t$ to satisfy all assumptions required for our meta-algorithm.  Each observation matrix $C^i$ produces a different markov operator $G^{i} = [C^iB, \dots , C^iA^{h-1}B]$, such that
$y^{i}_t = y^{\text{nat},i}_t + G^{i} u_{t-1:t-h}$.
Using $G^i$ agent $i$ can compute $y^{\text{nat},i}_t$ and build a PEO in the same way as the single partial observation setting.

\paragraph{Linear Dynamics with Neural Network Policies}
All the previous policy parameterizations were linear in some feature representation. Recent work shows that nonstochastic control can extend beyond linear policies using analysis from deep learning theory \cite{chen2021provable}. The authors consider the fully observed setting with fully-connected ReLU neural network policies that act on normalized windows of disturbances. A regret bound for online episodic control is shown by proving that under some technical conditions, a PEO is $\eps$-\emph{nearly-convex}, which means 
\begin{align*}
    p(x) - p(y) \le \ip{\nabla p(x)}{x-y} - \eps~.
\end{align*}

It can be readily shown that Theorem~\ref{thm:multi_ocomem} can be extended to nearly convex functions using near-convexity in place of convexity, so we can get a multi-agent control regret guarantee for two-layer neural nets that act on normalized windows of disturbances. In addition, the proof that a PEO is nearly convex can be extended to neural net policies that act on normalized windows of Nature's $y$'s, so neural net policies can also be used in the partial observation settings described above.
\section{Robustness to System Failures: Overactuated Aircraft}\label{sec:admire}

In this section we provide the mathematical model of an overactuated aircraft based on ADMIRE \cite{harkegaard2005resolving}, and demonstrate the robustness of our controller to system failures.

Let $\alpha,\beta,p,q,r$ be angle of attack, slideslip angle, roll rate, pitch rate and yaw rate respectively. State space can be given as $x=\begin{bmatrix} \alpha & \beta & p & q & r
\end{bmatrix}^T$ and input space consists of 4 one-dimensional controllers, $\begin{bmatrix} u^1 & u^2 & u^3 & u^4
\end{bmatrix}.$ Discrete linear dynamics of the system can be given as $x(t+1)= Ax(t)+\sum_{i=1}^4B_iu^i(t)+W(t)$ where,
\begin{align*}
A=\begin{bmatrix}
1.5109 & 0.0084 & 0.0009 & 0.8598 & -0.0043 \\
0 & -0.0295 & 0.0903 & 0 & -0.4500 \\
0 & -3.1070 & -0.1427 & 0 & 2.7006\\
2.3057 & 0.0097 & 0.0006 & 1.5439 & -0.0029\\
0 & 0.5000 & 0.0125 & 0 & 0.4878
\end{bmatrix}\\ B=\begin{bmatrix}B_1 & B_2 & B_3 & B_4\end{bmatrix}=\begin{bmatrix}
0.6981 & -0.5388 & -0.5367 & 0.0029 \\
0 & -0.2031 & 0.2031 & 0.3912 \\
0 & -2.0768 & 2.0768 & -0.4667\\
1.8415 & -1.4190 & -1.4190 & 0.0035\\
0 & -0.1854 & 0.1854 & -0.7047
\end{bmatrix}
\end{align*}
and $W(t)$ is the disturbance. 

\subsection{Experimental Setup}

Controller details:
\begin{enumerate}
    \item GPC has a decaying learning rate of $\frac{0.001}{t}$, a rollout length $h$ of $5$ and uses DACs with windows of size $5$.
    \item MAGPC is split into $4$ $1$-d controllers, each using decaying learning rate of $\frac{0.001}{t}$, a rollout length $5$ and a DAC with window length $5$.
    \item We use a linear $\cH_{\infty}$ controller and standard infinite horizon LQR linear controller.
\end{enumerate}

Disturbance details:
\begin{enumerate}
    \item Random walk chooses $w_t = w_{t-1} + X_t$ where $X_t$ is a standard Gaussian random variable.
    \item Gaussian noise is iid. and has variance $1$.
    \item Sinusoidal noise is chosen via $w_i= \sin(2t + \phi_i)$ where $\phi = [12.0, 21.0, 3.0, 42.0, 1.0]$, picked arbitrarily once.
\end{enumerate}

Fourth control is set to $0$ in one experiment and otherwise controls are left unchanged.

\end{document}